\documentclass[11pt]{amsart}
\usepackage{geometry}                
\geometry{letterpaper}                   
\usepackage{graphicx}
\usepackage{amssymb}
\usepackage{epstopdf}
\DeclareGraphicsRule{.tif}{png}{.png}{`convert #1 `dirname #1`/`basename #1 .tif`.png}

\newtheorem{theorem}{Theorem}[section]
\newtheorem{lemma}{Lemma}[section]
\newtheorem{corollary}{Corollary}[section]

\newtheorem{conjecture}{Conjecture}[section]
\newtheorem{definition}{Definition}[section]

\numberwithin{equation}{section}

\def\Z{\mathbb Z}

\def\R{\mathbb R}

\def\D{\Delta}

\def\d{\partial}

\def\e{\epsilon}

\def\b{\beta}

\def\bfc{{\mathbf c}}
\def\bfc\Theta{\mathcal c\Theta}

\title[On bordisms of immersions and embeddings with trivial normal line bundles]{On immersions and embeddings with trivial normal line bundles}

\author[G.~Katz]{Gabriel Katz}
\address{MIT, Department of Mathematics, 77 Massachusetts Ave., Cambridge, MA 02139, U.S.A.}
                                           
\email {gabkatz@gmail.com}

\begin{document}
\maketitle 

\begin{abstract} Let $Z$ be a smooth compact $(n+1)$-manifold. We study smooth embeddings and immersions $\beta: M \to  Z$ of compact or closed $n$-manifolds $M$ such that the normal line bundle $\nu^\beta$ is trivialized. For a fixed $Z$, we introduce an equivalence relation between such $\beta$'s; it is a crossover between pseudo-isotopies and bordisms. We call this equivalence relation ``{\sf quasitopy}". It comes in two flavors: $\mathsf{IMM}(Z)$ and  $\mathsf{EMB}(Z)$, based on immersions and embeddings into $Z$, respectively. We prove that the natural map $\mathsf{A}:\mathsf{EMB}(Z) \to \mathsf{IMM}(Z)$ is injective and admits a right inverse $\mathsf{R}:\mathsf{IMM}(Z) \to \mathsf{EMB}(Z)$, induced by the resolution of self-intersections. As a result, we get a map $$\mathcal B\Sigma:\; \mathsf{IMM}(Z) \big/ \mathsf{A}(\mathsf{EMB}(Z)) \longrightarrow \bigoplus_{k \in [2, n+1]} \mathbf B_{n+1-k}(Z)$$ whose target is a collection of smooth bordism groups of the space $Z$ and which differentiate between immersions and embeddings.
\end{abstract}

\section{Introduction}

This paper contains few remarks that grew out of \cite{K1} and \cite{K2}. In the present paper, the subjects and settings of these two articles are drastically simplified and, therefore, merit a separate presentation. \smallskip

Let $Z$ be a smooth $(n+1)$-dimensional compact manifold with boundary, and $M$ a smooth compact $n$-dimensional manifold with boundary. We consider immersions/embedding $\b: (M, \d M) \to (Z, \d Z)$, whose normal $1$-bundle $\nu^\b =_{\mathsf{def}} \b^\ast(TZ) / TM$ is \emph{trivialized}. 

We denote by $\bar\b$ the immersion $\b$ in which the trivialization of $\nu^\b$ is flipped to the uposite.

\begin{definition}\label{def.IMM}
We say that two such immersions $\b_0$ and $\b_1$ are 
   {\sf cobordant} (or {\sf quasitopic} in the terminology of \cite{K1} and \cite{K2}), if there exists a smooth compact 
   $(n+1)$-dimensional manifold $N$, where $$\d N = (-M_0 \, \sqcup \, M_1)\bigcup_{\{-\d M_0\, \sqcup \, \d M_1\}} \delta N,$$ and an immersion $$B: (N, \delta N) \to (Z \times [0,1],\, \d Z \times [0,1])$$ so that: 
$B|_{M_0}  = \b_0$,  $B|_{M_1}  = \b_1$, and the trivialized normal bundle $$\nu^B =_{\mathsf{def}}  B^\ast(T(Z\times [0, 1])) / TN$$ of $B(N)$ in $Z \times [0,1]$ is an extension of trivialized bundles $\nu^{\b_0} \sqcup \nu^{\b_1}$. \smallskip

We denote by $\mathsf{IMM}(Z, \d Z)$ 
the set of bordism classes of such immersions. \smallskip

If we replace immersions $\b_0, \b_1, B$ in this setting with regular \emph{embeddings}, we get a similar notion of cobordism.  We denote by $\mathsf{EMB}(Z, \d Z)$ the set of embeddings, considered up to the cobordisms which are realized by embeddings in $Z \times [0,1]$ with trivialized normal bundles. 

We use the notations  $\mathsf{IMM}(Z)$,  $\mathsf{EMB}(Z)$ for similar constructions based on immersions/embeddings of \emph{closed} manifolds $M$. \hfill $\diamondsuit$

\end{definition}

Note that if $Z$ is orientable, then the triviality of the normal bundles $\nu^\b$ and of $\nu^B$ implies that $M$ and $N$ are orientable as well.\smallskip

For a given immersion $\b: (M, \d M) \to (Z,\, \d Z)$, we consider the $k$-fold product map $(\b)^k: (M)^k \to (Z)^k$, where $k \geq 2$. Let $\Sigma^\b_k$ be the preimage of the diagonal $\D \subset (Z)^k$ under the map $(\b)^k$. By definition, for a $k$-{\sf normal} immersion $\b$ (such $\b$'s are open and dense in the space of all immersions), $\Sigma^\b_k$ is a smooth manifold of dimension $n-k+1$ \cite{LS}. 

Following  \cite{LS}, we call $\Sigma^\b_k \subset (M)^k$ {\sf the $k$-self-intersection manifold of} $\b$. Note that the same construction applies to $\b|: \d M \to \d Z$, producing the boundary $\d\Sigma^\b_k$ of $\Sigma^\b_k$. \smallskip

Let $p_1: (M)^k \to M$ be the obvious projection on the first factor of the product. Then $p_1: \Sigma^\b_k \to M$ is an \emph{immersion} \cite{LS}. Its image $p_1(\Sigma^\b_k)$ is the set of points $x_1 \in M$ such that there exist distinct $x_2, \ldots , x_k \in M$ with the property $\b(x_2) = \b(x_1),\, \ldots ,\, \b(x_k) = \b(x_1)$. 
If both $M$ and $Z$ are orientable, then so is $\Sigma^\b_k$. 

This construction gives rise to a map $\b \circ p_1: (\Sigma^\b_k, \d\Sigma^\b_k)  \to (Z, \d Z)$. For $k \geq 2$, the map $\b \circ p_1$, regarded as an element of the bordism group $\mathbf B_{n-k+1}(Z, \d Z)$ or (when $Z$ is oriented) of the oriented bordism group $\mathbf{OB}_{n-k+1}(Z, \d Z)$, is an invariant that evidently distinguishes between immersions and embeddings. 

\section{Comparing bordisms of immersions and embeddings that have trivial normal line bundles}

Consider the obvious maps $$\mathsf{A}:\mathsf{EMB}(Z, \d Z) \to \mathsf{IMM}(Z, \d Z) \text{ \; and \; } \mathsf{A}:\mathsf{EMB}(Z) \to \mathsf{IMM}(Z).$$ 

\smallskip  

\begin{theorem}\label{th.6_inject} 
\begin{itemize}
\item The map $\mathsf{A}:\mathsf{EMB}(Z, \d Z) \to \mathsf{IMM}(Z, \d Z)$ 
is injective. In other words, if two embeddings, $\b_0$ and $\b_1$, with trivialized normal line bundles are cobordant by a cobordism that is an immersion with a trivialized normal line bundle, then they are cobordant by a cobordism that is an embedding with a trivialized normal line bundle.

\item There exists a surjective  resolution map $\mathsf{R}: \mathsf{IMM}(Z, \d Z) \to \mathsf{EMB}(Z, \d Z)$ such that the composition $\mathsf{R} \circ \mathsf{A} = \mathsf{id}$. Moreover, any $\b \in  \mathsf{IMM}(Z, \d Z)$ and $\mathsf{R}(\b) \in \mathsf{EMB}(Z, \d Z)$ define the same homology class in  $H_n(Z, \d Z; \Z)$, provided that $Z$ is orientable.\smallskip

\item Similar claims are valid for the map $ \mathsf{A}:\mathsf{EMB}(Z) \to \mathsf{IMM}(Z)$.
\end{itemize}
 \end{theorem}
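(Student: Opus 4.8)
The plan is to build a single geometric operation---the coherent, coorientation-respecting resolution of self-intersections---and to extract all three bullets from it. First I would put every immersion in general position, so that for a $k$-normal $\b$ the multiple-point strata $\Sigma^\b_k$ are smooth submanifolds of the expected dimension, and I would fix, once and for all, the local normal form at a $k$-fold point: in suitable coordinates on $Z$ the $k$ sheets are the coordinate hyperplanes $\{x_i=0\}$, $i=1,\dots,k$, meeting along a codimension-$k$ stratum, and the trivialization of $\nu^\b$ records a coorientation $\e_i\,\d_{x_i}$ of each sheet (after reindexing, $\e_i=+$). The resolution replaces the local equation $\prod_i x_i=0$ by a regular level $\prod_i x_i=\delta$ for small $\delta>0$: this is an embedded hypersurface, it degenerates to the union of sheets as $\delta\to 0$, and on each of its components the gradient of $\prod_i x_i$ furnishes a nowhere-zero normal that matches the prescribed coorientation near the old sheets. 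Thus the resolution is canonical given only the trivialization, is supported in an arbitrarily small neighborhood of $\bigcup_k \b(\Sigma^\b_k)$, produces an embedding $\b': M'\hookrightarrow Z$ with an extended trivialized $\nu^{\b'}$, and coincides with $\b$ wherever $\b$ is already injective.

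Granting this construction, I would prove injectivity of $\mathsf{A}$ directly. Given embeddings $\b_0,\b_1$ and an immersed cobordism $B:(N,\delta N)\to (Z\times[0,1],\,\d Z\times[0,1])$ between them with trivialized $\nu^B$, the key observation is that $B|_{M_0}=\b_0$ and $B|_{M_1}=\b_1$ are embeddings, so all multiple points of $B$ lie in the interior and stay away from $\d N\supset M_0\sqcup M_1$; after arranging $B$ to be a product in a collar of the boundary I would apply the resolution to $B$ alone. Because the resolution is supported near the self-intersection set, it does not disturb the boundary, so the resulting embedding $B':(N',\delta N')\hookrightarrow Z\times[0,1]$ still satisfies $B'|_{M_0}=\b_0$, $B'|_{M_1}=\b_1$, carries an extended trivialization of $\nu^{B'}$, and hence is an embedded cobordism realizing $\b_0=\b_1$ in $\mathsf{EMB}(Z,\d Z)$. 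This is exactly injectivity.

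Next I would define $\mathsf{R}$ by applying the same resolution to a single immersion $\b$, setting $\mathsf{R}(\b)=\b'$. Naturality of the local model under restriction to the boundary and under products shows that resolving an immersed cobordism restricts on its two ends to the resolutions of $\b_0$ and $\b_1$; consequently $\mathsf{R}$ is well defined on $\mathsf{IMM}(Z,\d Z)$ and lands in $\mathsf{EMB}(Z,\d Z)$. If $\b$ is already an embedding there are no multiple points and $\b'=\b$, giving $\mathsf{R}\circ\mathsf{A}=\mathsf{id}$; in particular $\mathsf{A}$ admits a left inverse (re-deriving injectivity) and $\mathsf{R}$ is surjective. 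For the homology statement, I would note that the one-parameter family $\prod_i x_i=t$, $t\in[0,\delta]$, is a local cobordism in $Z\times[0,1]$ between the sheets and their resolution; assembling these over all strata exhibits $\b$ and $\mathsf{R}(\b)$ as bordant cooriented (hence, for orientable $Z$, oriented) $n$-cycles in $Z$ that moreover agree outside a small set, so $[\b]=[\mathsf{R}(\b)]$ in $H_n(Z,\d Z;\Z)$. The closed case is identical with $\delta N=\emptyset$.

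The main obstacle I anticipate is verifying that the local model glues into a genuinely global, embedded resolution: one must check that the coorientation-prescribed smoothing at double points is compatible at the triple and higher multiple-point strata (so that no residual intersections survive near $\Sigma^\b_k$ for $k\ge 3$), and that the extended section of $\nu^{\b'}$ is continuous across the transition regions where $\prod_i x_i=\delta$ meets the unperturbed sheets. I expect to handle this by an induction on the depth of the stratification, resolving along the deepest strata first and propagating outward, using the fact that the functions $\prod_i x_i$ for the various faces are mutually compatible; keeping the trivialization coherent throughout this induction, rather than merely reducing the number of self-intersections, is the delicate point.
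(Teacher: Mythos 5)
Your proposal is correct and takes essentially the same route as the paper: resolve self-intersections via the local model $\{x_1\cdots x_q=\e\}$, use the global trivialization of the normal bundle to pick out the positive chambers coherently, induct downward from the deepest multiple-point strata, prove injectivity by resolving the immersed cobordism rel its ends, define $\mathsf{R}$ by resolving a single ($k$-normal) immersion, and get the homology statement from the chain swept out by the levels $\{x_1\cdots x_q=t\}$, $t\in[0,\e]$. The ``delicate gluing point'' you flag at the end is precisely what the paper supplies: it reduces the structure group of the tubular-neighborhood fibration over each stratum $Y_q$ to $\mathsf{Diff}_+(D^q,\{\tilde H_\e\}_{\e\geq 0})$ by a deformation-retraction argument, which makes the fiberwise hypersurfaces $\tilde H_1$ glue into a global embedded resolution.
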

 
 \begin{proof} Let two embedding $\b_0: (M_0, \d M_0) \to (Z, \d Z)$ and $\b_1: (M_1, \d M_1) \to (Z, \d Z)$ with trivialized normal bundles be connected by a cobordism 
 $$B: (N, \delta N) \to (Z \times [0,1], \d Z \times [0,1]),$$ an immersion with its normal $1$-bundle $\nu^B$ being trivialized so that the trivialization extends the trivializations of $\nu^{\b_0}$ and of $\nu^{\b_1}$. Then we can perturb $B$ so that it becomes $k$-{\sf normal} (see \cite{LS}) 
 for all $k \leq  n+2$ \cite{Th}.  The perturbation $B'$ of $B$ may be chosen so small that the normal $1$-bundle $\nu^{B'}$ is still trivial and  $B'|_{M_0 \sqcup M_1}= B|_{M_0 \sqcup M_1}$. 
 
 We aim to resolve $B'$ into an embedding $\tilde B$ of a new manifold $\tilde N$. Let us start with a local model of such a resolution. 
 
 \begin{figure}[ht]
\centerline{\includegraphics[height=2.5in,width=3.2in]{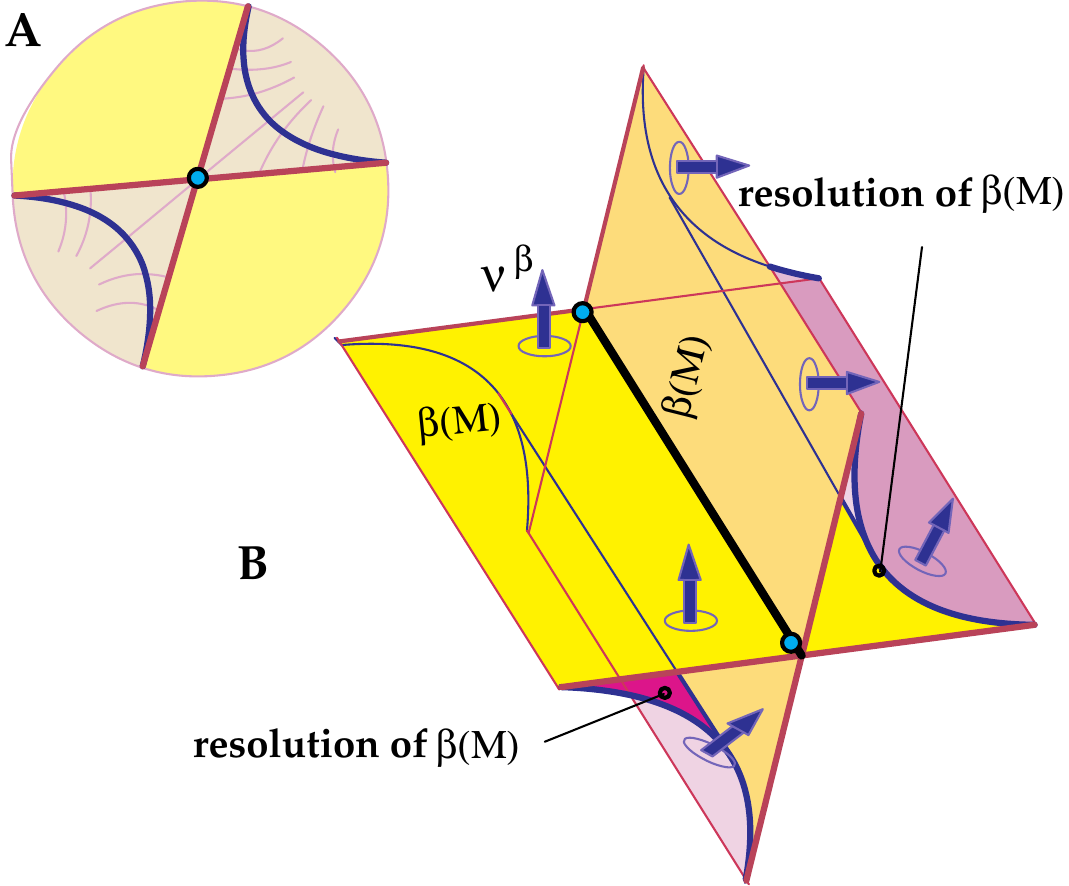}}
\bigskip
\caption{\small{A resolution of an immersion $\b: M \to Z $ into an embedding. Note that the orientation of the normal line bundle $\nu^\b$ helps to orient the normal line bundle of the resolution. The same picture may represent a  resolution of an immersion $B: N \to Z \times [0, 1]$ into an embedding.}}
\label{fig.resolution} 
\end{figure}

 Let $D^q$ be an open unit ball in $\R^q$, centered at the origin. We fix a  diffeomorphism $\phi: D^q \to \R^q$, where $\phi(\vec 0)= \vec 0$ and the radii of $D^q$ are mapped onto the infinite rays emanating from the origin. Let $x_1, \ldots , x_q$ be the canonical coordinates on $\R^q$. Consider the hypersurface $H_\e =_{\mathsf{def}} \{x_1 \cdot \ldots \cdot x_q = \e\}$, $\e > 0$. Replacing $H_0$ with $H_\e$,  or rather, $\tilde H_0 =_{\mathsf{def}} \phi^{-1}(H_0)$ with $\tilde H_\e =_{\mathsf{def}} \phi^{-1}(H_\e)$ will mimic the resolution we are after.   
 
 Put $B =_{\mathsf{def}} B'$. The resolution of $B$ is carried inductively: we assume that $B$  already has no self-intersections of multiplicities $ > q$ and consider the locus $Y =_{\mathsf{def}} Y_q^B \subset B(N)$ where exactly $q$ branches of $B(N)$ intersect transversally. Thus, $Y$ is  a compact manifold. Note that $\d Y \subset \d Z \times [0, 1]$ since $B|_{M_0 \sqcup M_1}$ is an embedding. 
 
We consider a tubular neighborhood $U$ of $Y$ in $Z \times [0, 1]$ such that $U \cap (\d Z \times [0, 1])$ is a tubular neighborhood of $\d Y$ in $\d Z \times [0, 1]$. 
 Let $\pi: U \to Y$ be the locally trivial fibration whose fibers are open $q$-balls. For a sufficiently narrow tube $U$, the intersection of $B(N)$ with $U$ is a fibration over $Y$ with the $\pi$-fiber $\tilde H_0$. 
 
 Over a small neighborhood $V_y$ of each point  $y \in Y$, $B(N)$ is given by a smooth map $\Psi: \pi^{-1}(V_y) \to B^q$, such that $\Psi$ is transversal to each branch of $\tilde H_0 \subset D^q$ and $\Psi^{-1}(\vec 0) = B(N) \cap \pi^{-1}(V_y)$. Thus, the structure group of the fibration $\pi: U \to Y$ is the group of diffeomorphisms $\mathsf{Diff}(D^q, \tilde H_0)$ of $D^q$ that fix $\vec 0$ and preserve $\tilde H_0$ invariant. In general, $\pi_0(\mathsf{Diff}(D^q, \tilde H_0)) \neq 1$. However, due to the global triviality of $\nu^B$, we have a consistent way of picking the preferred normals to the branches of $B(N) \cap U$.  In particular, we have a consistent way of picking the chambers in the fibers $\pi^{-1}(y) \setminus (\pi^{-1}(y) \cap B(N)) \approx D^q \setminus \tilde H_0$, where the product $x_1 \cdot \ldots \cdot x_q > 0$. We call such chambers {\sf positive}. Hence, the structure group of $\pi: U \to Y$ reduces to the subgroup $\mathsf{Diff}_+(D^q, \tilde H_0) \subset \mathsf{Diff}(D^q, \tilde H_0)$ whose elements map  negative chambers to negative chambers. For $\e > 0$, the group $\mathsf{Diff}_+(D^q, \tilde H_0)$  admits a further reduction to the subgroup $\mathsf{Diff}_+(D^q, \{\tilde H_\e\}_{\e \geq 0})$ whose diffeomorphisms preserve each hypersurface $\tilde H_\e$, $\e \geq 0$, invariant. Let us show that  $\mathsf{Diff}_+(D^q, \{\tilde H_\e\}_{\e \geq 0})$ is a deformation retract of  $\mathsf{Diff}_+(D^q, \tilde H_0)$. Indeed, consider the open set $D^q_+\subset D^q \setminus \tilde H_0$, the union of positive open chambers, and, for each $\e > 0$, its radial retraction $p_\e: D^q_+ \to \tilde H_\e$. For every $\psi \in \mathsf{Diff}_+(D^q, \tilde H_0)$, consider the map $\chi_\psi$, given by $\chi_\psi(a) =_{\mathsf{def}} p_{\e(a)} \circ \psi(a)$, where $a \in \bar D^q_+$ and $\e(a)$ is the non-negative value such that the hypersurface $\tilde H_{\e(a)}$ contains $a$. If $a \in \tilde H_0$, then put $\chi_\psi(a) =_{\mathsf{def}} \psi(a)$. Evidently, each $\chi_\psi \in \mathsf{Diff}_+(D^q, \{\tilde H_\e\}_{\e \geq 0})$. The $t$-family $\{p^t_{\e(a)} \circ \psi(a)\}_{t \in [0,1]}$ delivers the deformation of $\mathsf{Diff}_+(D^q, \tilde H_0)$ onto $\mathsf{Diff}_+(D^q, \{\tilde H_\e\}_{\e \geq 0})$.  Here $\{p^t_\e: D^q_+ \to D^q_+\}$ denotes the family of maps  that interpolate radially between $p_\e$ and the identity map $\mathsf{id}$.  
 
Therefore, we can reduce the structure group of the fibration $\pi: U \to Y$ to the subgroup $\mathsf{Diff}_+(D^q, \{\tilde H_\e\}_{\e \geq 0})$. As a result, the hypersurface $\tilde H_1$ (where $\e = 1$) in each $\pi$-fiber gives rise to a globally defined smooth hypersurface $\mathcal H \subset U$. 
Now, we resolve $B(N)$ into $$B^\dagger(N^\dagger) =_{\mathsf{def}}  \big[B(N) \setminus (B(N) \cap U)\big] \cup \mathcal H$$ with no self-intersections of the multiplicity $q$. Here $N^\dagger =_{\mathsf{def}} (N \setminus B^{-1}(U)) \cup_B \mathcal H$. 

Critically, this resolution is such that the trivialization of the normal $1$-bundle \hfill\break $\nu^B \big |_{\big[B(N) \setminus (B(N) \cap U)\big]}$ extends over $B^\dagger(N^\dagger)$: just consider the gradient of the product function $x_1 \cdot \ldots \cdot x_q$.
\smallskip 

Finally, the basis of induction by $q$ is the case ``$q = n+2$". For this case, $Y$ is a finite set, and the resolution $B^\dagger$ of $B$ works instantly.\smallskip 

Let us validate the second claim of the lemma. We also use an induction by $q$, whose base is now $q = n+1$. First, we observe that any immersion $\b: (M, \d M) \to (Z, \d Z)$ is cobordant in $\mathsf{IMM}(Z, \d Z)$ to an immersion $\tilde \b: (M, \d M) \to (Z, \d Z)$ which is $k$-normal for all $k \leq n+1$. Indeed, such $k$-normal immersions are dense in $C^\infty$-topology \cite{Th}. Any sufficiently $C^\infty$-close immersions are pseudo-isotopic and thus cobordant in $\mathsf{IMM}(Z, \d Z)$, the cobordism having a trivial normal bundle in $Z \times [0,1]$. 

Now the resolution map $\mathsf{R}: \mathsf{IMM}(Z, \d Z) \to \mathsf{EMB}(Z, \d Z)$ is defined by applying the previous resolution recipe ``$B^\dagger$" to $\tilde \b$ (and not to the cobordism $B$, as we just did).  Of course, the resolution $\b^\dagger = \mathsf{R}(\b)$ changes the topology of the manifold $M$. We denote by $M^\dagger$ the resulting manifold. 

Note that the $(n+1)$-chain $K \subset Z$, locally defined as $\bigcup_{\e \in [0,1]} \tilde H_\e$, has the boundary $-\tilde\b(M) \cup \b^\dagger(M^\dagger)$, which validates the equality $[\b(M)] = [\b^\dagger(M^\dagger)] \in H_n(Z, \d Z; \Z)$.
\smallskip

Similar claims for the injectivity of $\mathsf{A}:\mathsf{EMB}(Z) \to \mathsf{IMM}(Z)$ and its left inverse $\mathsf{R}: \mathsf{IMM}(Z) \to \mathsf{EMB}(Z)$ can be validated in exactly the same way. \hfill 
 \end{proof}
 %

Let $g$ be a Riemannian metric on $Z$, and $M$ a closed manifold. For any immersion $\b: M \to Z$ with the trivialized normal line bundle $\nu^\b$, we denote by $\b^\ast(g)$ the pull-back of $g$ to $M$. Let $vol(\b, g)$ be the volume of $M$ in the metric $\b^\ast(g)$.

\begin{corollary}\label{cor.6_minimal_hypersurface} Let $M$ be a closed $n$-manifold. If an immersion $\b: M \to Z$ with the trivialized normal line bundle $\nu^\b$ minimizes the volume $vol(\b, g)$ in the homology  class $[\b(M)] \in H_n(Z; \Z)$, then $\b$ is a regular embedding.
\end{corollary}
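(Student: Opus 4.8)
The plan is to argue by contradiction, using the resolution map of Theorem~\ref{th.6_inject} to manufacture a competitor of strictly smaller volume. Suppose $\b$ minimizes $vol(\b, g)$ in the class $[\b(M)] \in H_n(Z;\Z)$ (so that, implicitly, $Z$ is orientable) but is \emph{not} an embedding. Then $\b(M)$ carries a nonempty self-intersection set, so there are distinct $x, y \in M$ with $\b(x) = \b(y)$. Because $\b$ is volume-minimizing, it is in particular a minimal immersion, hence real-analytic by elliptic regularity; and by the strong maximum principle two distinct local branches of a minimal hypersurface cannot be tangent without coinciding. Consequently every self-meeting of $\b$ is transversal, and by analyticity the locus where exactly two branches meet is open and dense in the double-point set, hence nonempty. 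Fix such a point $p = \b(x) = \b(y)$.

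Near $p$ the image is modeled on $\R^{n-1} \times (\text{two transversal lines in } \R^2)$: the two $n$-sheets cross transversally along an $(n-1)$-dimensional locus, and in the $2$-plane normal to that locus they appear as two crossing lines. I would then apply the local resolution of Theorem~\ref{th.6_inject} in a small tube about this locus, replacing fiberwise the crossing $H_0 = \{x_1 x_2 = 0\}$ by the smooth arc $H_\e = \{x_1 x_2 = \e\}$. As in the proof of that theorem, the trivialization of $\nu^\b$ selects the positive chambers, so the modification is consistent and the resulting $\b'$ is again an immersion with trivialized normal line bundle; and the local chain $\bigcup_{\e' \in [0,\e]} H_{\e'}$ exhibits $[\b'(M')] = [\b(M)] \in H_n(Z;\Z)$. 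Thus $\b'$ is a legitimate competitor, and it suffices to show $vol(\b', g) < vol(\b, g)$.

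The heart of the matter is this strict inequality. Since $H_\e$ is a valid resolution for \emph{every} $\e > 0$, I am free to take $\e$ small, confining the modification to an $O(\sqrt{\e})$-neighborhood of the locus and reducing the comparison, slice by slice, to a flat computation. In each normal $2$-slice one compares the two crossing radii, of length $2\rho$ per corner, with the arc of $H_\e$ that rounds the corner; a direct estimate shows the arc is shorter by an amount of order $\sqrt{\e}$, so the slice-wise change is $-c\sqrt{\e}$ with $c > 0$ bounded below on the compact locus by transversality. Because the modification region has size $O(\sqrt{\e})$, over it the metric $g$ differs from its Euclidean model by a factor $1 + O(\sqrt{\e})$, so the metric correction to the already-small difference is $O(\e)$. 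Integrating over the self-intersection locus therefore gives $vol(\b', g) - vol(\b, g) = -c\sqrt{\e} + O(\e) < 0$ for small $\e$, contradicting minimality and forcing $\b$ to be an embedding.

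The main obstacle is precisely this last step: establishing the strict slice-wise length decrease (the hyperbola-versus-cross estimate) and controlling the passage from the flat local model to the Riemannian metric $g$ uniformly along the locus. The delicate points are the uniform lower bound $c > 0$, which rests on transversality and hence on the regularity input extracted from minimality, and the bookkeeping that the genuine $\sqrt{\e}$ flat decrease dominates the $O(\e)$ curvature error once $\e$ is small. The preliminary reduction, via analyticity and the maximum principle, to a $\b$ whose self-intersections are transversal is what makes both of these manageable; absent minimality one would have to perturb $\b$ to make it $k$-normal, which is not available here since a minimizer cannot be perturbed without risking an increase in volume.
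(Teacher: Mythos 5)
Your overall strategy coincides with the paper's: apply the resolution construction from the proof of Theorem \ref{th.6_inject} to the self-intersection locus, note that the resolved hypersurface realizes the same class in $H_n(Z;\Z)$ (via the chain swept by the $\tilde H_{\e'}$'s), and contradict minimality by showing the resolution strictly decreases volume. Your slice-wise hyperbola-versus-corner estimate, with the $\sqrt{\e}$ gain dominating the metric error, is a more quantitative version of the paper's assertion that the gradient flow of $x_1\cdots x_q$ shrinks $n$-volume in the positive chambers, and that part is sound. You also correctly flag a point the paper passes over in silence: a minimizer cannot be perturbed to a $k$-normal immersion, so the transversal, stratified structure of its self-intersections needs an independent justification.

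However, your reduction to ``a compact locus where exactly two branches cross transversally'' has genuine gaps. First, real-analyticity of a minimal immersion requires the ambient metric $g$ to be real-analytic; for merely smooth $g$, elliptic regularity gives smoothness only, so the ``by analyticity'' density argument is unavailable. Second, the strong maximum principle excludes only one-sided tangencies; two distinct minimal sheets can be tangent at a point and cross there (the difference of the two local graphs satisfies a linear elliptic equation and may vanish to finite order $\geq 2$ with a nodal-type crossing --- e.g., Enneper's surface and its tangent plane at the origin), so ``every self-meeting of $\b$ is transversal'' does not follow. Third, even granting pairwise transversality, the exactly-two-branch locus need not be dense in the double-point set, nor compact: three sheets can pairwise cross transversally along one common $(n-1)$-dimensional locus (model: the hyperplanes $\{x_1=0\}$, $\{x_2=0\}$, $\{x_1=x_2\}$ in $\R^{n+1}$, all totally geodesic in the flat metric), and in general the closure of the double stratum contains higher-multiplicity points. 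Your resolution is performed ``in a small tube about this locus,'' but the globalization of the fiberwise replacement $\tilde H_0 \rightsquigarrow \tilde H_\e$ into a hypersurface $\mathcal H$ only closes up when the locus being resolved is a closed (compact) manifold, which the exactly-double stratum generally is not. This is exactly why the paper inducts on the multiplicity $q$ downward from $q=n+1$ and always resolves the stratum of \emph{top} multiplicity, which is automatically closed in $Z$. To repair your argument you would either need to import that top-down induction, or genuinely establish the clean stratified structure of the self-intersection set of a minimizer --- something which, to be fair, the paper itself assumes rather than proves.
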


\begin{proof} Again, we argue by an induction in $q$,  whose base is  $q = n+1$.
We assume that $\b$  already has no self-intersections of multiplicities $ > q$ and consider the locus $Y_q$ where exactly $q$ branches of $\b(M)$ intersect transversally. As before, we pick a sufficiently narrow tubular neighborhood $U$ of $Y_q$ in $Z$ in which the resolution $\b^\dagger$ of the minimal $\b$ is supported. 

We claim that $vol(\b, g) > vol(\b^\dagger, g)$ for a sufficiently narrow $U$. Indeed, outside $U$, $\b(M) = \b^\dagger(M^\dagger)$; thus it is sufficient to compare the $g$-induced volumes of $\b(M) \cap U$ and  $\b^\dagger(M^\dagger) \cap U$.  We may assume that $U$ has the $(n+1-q)$-disk bundle structure over a $q$-manifold $Y$, where $Y$ is the locus of $(n+1-q)$-fold self-intersection $\b(M)$, and the disk fibers $\{U_y\}$ of the bundle $U \to Y$ are orthogonal to $Y$ in the metric $g$. By the Fubini Theorem, 
$$vol_g(\b(M) \cap U) = \int_Y vol_g(\b(M) \cap U_y)\, dg|_Y, \text{ and } $$ 
$$vol_g(\b^\dagger (M^\dagger) \cap U) = \int_Y vol_g(\b^\dagger(M^\dagger) \cap U_y)\, dg|_Y.$$
 However, for each $y \in Y$, (thanks to the hyperbolic-like geometry of the resolution $\b^\dagger(M^\dagger) \cap U_y$ and the near-euclidean geometry of the fibers $U_y$, as depicted in Fig. \ref{fig.resolution}, A) 
 the gradient flow of the product function $\phi^\ast(x_1 \cdot \ldots \cdot x_q)$, in the positive chambers, decreases the $n$-volume of  compact domains in the constant level loci $\tilde H_\e$ of $\phi^\ast(x_1 \cdot \ldots \cdot x_q)$. 
 Therefore,
 $$vol_g(\b(M) \cap U_y) > vol_g(\b^\dagger(M^\dagger) \cap U_y).$$  By Theorem \ref{th.6_inject}, $[\b(M)] = [\b^\dagger(M^\dagger)]$. Thus, the inequality $vol(\b, g) > vol(\b^\dagger, g)$  contradicts the hypotheses that $\b$ is the volume-minimizing immersion in its homology class $[\b(M)] \in H_n(Z; \Z)$.  Therefore, by induction, the the volume-minimizing $\b$ cannot have self-intersections. So, it must be an embedding.
\end{proof}

\noindent {\bf Remark 2.1.} 
It is well-known that any integral homology class $h \in H_n(Z; \Z)$ is realizable as the image of the fundamental class  $[M]$ under an embedding $\b: M \hookrightarrow Z$ of some closed smooth $n$-manifold $M$ (see the proof of Lemma \ref{lem.EMB_is_H}). (In general, when $\dim Z - \dim M \geq 2$, even not any $\Z_2$-homology class has a realization by an immersion $\b: M \to Z$!)\smallskip

We call a Riemannian metric $g$ on a compact $(n+1)$-manifold $Z$ $\mathsf n$-{\sf ample}, if any homology class $h \in H_n(Z; \Z)$ is realizable by a {\sf minimal} hypersurface with trivial normal line bundle, i.e., by an \emph{embedding} $\b: M  \hookrightarrow  Z$ with trivial line bundle $\nu^\b$ and with the minimal volume $vol(\b, g)$ among all embeddings $\b$, subject to $\b([M])= h$. For example, the flat metric on a torus $T^{n+1}$ is $n$-ample, and so is the hyperbolic metric on any closed orientable surface of genus $\geq 2$.
%
\smallskip

For a $n$-ample metric $g$, Corollary \ref{cor.6_minimal_hypersurface} implies that $vol(\b, g)$ gives rise to a {\sf norm} $\| h\|_g$ on the space $H_n(Z; \R)$, similar to the Thurston norm for $3$-folds \cite{Thur}. 

Note that, for an $n$-ample $g$, it is sufficient to define the ``norm" $$\| h \|_g =_{\mathsf{def}} \inf_{\{\text{embeddings } \b:\, M \hookrightarrow Z, \;\; \b([M])\, =\, h \}} vol(\b, g)$$ on the integral lattice $H_n(Z; \Z) \subset H_n(Z; \R)$ so that the triangle inequality $\| h_1 + h_2 \|_g \leq \| h_1 \|_g  + \| h_2 \|_g$ holds for all vectors $h_1, h_2$ of the lattice.

Let us validate the triangle inequality on the lattice $H_n(Z; \Z)$.
Take any pair of $n$-volume minimizing embeddings $\b_1: M_1  \hookrightarrow  Z$ and $\b_2: M_2  \hookrightarrow  Z$, such that $\b_1([M_1]) = h_1$ and $\b_2([M_2]) = h_2$. Then the immersion $\b_1 \sqcup \b_2: M_1 \sqcup M_2 \to Z$,  which realizes the homology class $h_1+ h_2$, by the resolution construction $\mathsf R$ of Corollary \ref{cor.6_minimal_hypersurface}, has the property $$\| h_1+ h_2\|_g \leq vol(\mathsf R(\b_1 \sqcup \b_2), g) \leq vol(\b_1 \sqcup \b_2, g) = vol(\b_1, g) + vol(\b_2, g) = \| h_1\|_g + \| h_2\|_g .$$ 

We wonder whether, for an $n$-ample $g$, the unit ball $\{h \in H_n(Z; \R) | \; \| h\|_g \leq 1\}$ is a convex polyhedron?
\hfill $\diamondsuit$
 
\begin{lemma}\label{lem.EMB_is_H} There are bijective maps $\mathcal G: \mathsf{EMB}(Z) \stackrel{\approx}{\rightarrow}  H^1(Z, \d Z; \Z)$ and $\mathcal G: \mathsf{EMB}(Z, \d Z) \stackrel{\approx}{\rightarrow}  H^1(Z; \Z)$. As a result, $\mathsf{EMB}(Z)$ depends only on the homology type of $(Z, \d Z)$,  and \hfill\break $\mathsf{EMB}(Z, \d Z)$ depends only on the homology type of $Z$.
\end{lemma}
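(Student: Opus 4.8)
The plan is to identify $\mathsf{EMB}(Z)$ with a set of homotopy classes of maps via the Pontryagin--Thom construction for cooriented hypersurfaces, and then to invoke $S^1 = K(\Z, 1)$. First I would observe that a trivialization of the line bundle $\nu^\b$ is, up to homotopy, the same as a coorientation of $M$ in $Z$: two trivializations inducing the same coorientation are joined by a path of nowhere-zero sections, which produces a product cobordism $M \times [0,1]$ realizing a quasitopy. Hence $\mathsf{EMB}(Z)$ is the set of cooriented closed hypersurfaces $M \subset \mathrm{int}(Z)$ taken up to cooriented cobordism inside $\mathrm{int}(Z) \times [0,1]$, and similarly $\mathsf{EMB}(Z, \d Z)$ is the set of properly embedded cooriented hypersurfaces $(M, \d M) \subset (Z, \d Z)$ up to proper cooriented cobordism.

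Next I would construct $\mathcal G$. Given a cooriented $M$, use the trivialization to fix a tubular neighborhood $M \times (-1, 1) \hookrightarrow \mathrm{int}(Z)$ with the positive direction along the coorientation, and define $f_\b : Z \to S^1 = \R/\Z$ by collapsing the complement of the tube to the basepoint $\ast$ and sending $(m, t)$ to $\tau(t)$, where $\tau : (-1,1) \to S^1 \setminus \{\ast\}$ is an orientation-preserving diffeomorphism. Since $M$ is closed and interior, $f_\b \equiv \ast$ near $\d Z$, so $f_\b$ represents a class in $[\,Z/\d Z,\ S^1\,]$. Because $S^1$ is a $K(\Z, 1)$, this set is canonically $\tilde H^1(Z/\d Z; \Z) = H^1(Z, \d Z; \Z)$, and I set $\mathcal G(\b) = [f_\b]$. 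For the pair version, $M$ meets $\d Z$ along $\d M$, so $f_\b$ carries no boundary constraint and lands in $[\,Z, S^1\,] = H^1(Z; \Z)$. I would stress that this route needs no orientability hypothesis on $Z$, which is exactly why the lemma is unconditional; when $Z$ is oriented, composing with Lefschetz duality $H^1(Z, \d Z; \Z) \cong H_n(Z; \Z)$ recovers $\mathcal G(\b) \leftrightarrow [\b(M)]$ and yields the realizability asserted in Remark 2.1.

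The three verifications are then routine transversality arguments, to be carried out in parallel for both flavors. \emph{Well-definedness}: a quasitopy $B : N \hookrightarrow \mathrm{int}(Z) \times [0,1]$ produces, by the identical collapse, a map $F : Z \times [0,1] \to S^1$ restricting to $f_{\b_0}, f_{\b_1}$ at the two ends and $\equiv \ast$ on $\d Z \times [0,1]$, i.e.\ a homotopy rel $\d Z$, so $\mathcal G$ descends to quasitopy classes. \emph{Injectivity}: if $\mathcal G(\b_0) = \mathcal G(\b_1)$, pick a smooth homotopy $F$ rel $\d Z$ between $f_{\b_0}$ and $f_{\b_1}$, make it transverse to a regular value $y \neq \ast$, and read off $N := F^{-1}(y)$ as a cooriented cobordism with normal framing supplied by $dF$; this is the required quasitopy. \emph{Surjectivity}: any class in $H^1(Z, \d Z; \Z)$ is $f^\ast(\mathrm{gen})$ for a smooth $f : (Z, \d Z) \to (S^1, \ast)$, and $M := f^{-1}(y)$ at a regular value is a closed cooriented embedded hypersurface with $\mathcal G(\b)$ equal to the given class.

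The one genuinely delicate point I would treat most carefully is the boundary bookkeeping that separates the two statements: in the closed case the hypersurfaces and all cobordisms stay in the interior, forcing $f$ and the homotopies to be constant on $\d Z$ and thereby producing the \emph{relative} group $H^1(Z, \d Z; \Z)$, whereas in the pair case the properness of $\d M \hookrightarrow \d Z$ removes the boundary constraint and yields the \emph{absolute} $H^1(Z; \Z)$. I would also check that the framing of $F^{-1}(y)$ obtained from $dF$ is an honest trivialization extending the given ones on the ends, so that the cobordism is admissible in the sense of Definition \ref{def.IMM}; this holds because $dF$ is nonzero along $F^{-1}(y)$ by transversality. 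Everything else---the smoothing of $f_\b$, Sard's theorem for regular values, and the identification $[\,W, S^1\,] = H^1(W;\Z)$---is standard and needs no special care.
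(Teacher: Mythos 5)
Your proposal is correct and follows essentially the same route as the paper: both apply the Pontryagin--Thom collapse to the trivialized normal bundle to get a map to $Th(\mathbf{1})\approx S^1=K(\Z,1)$, identify quasitopy classes with $[(Z,\d Z),(S^1,\star)]$ resp.\ $[Z,S^1]$, and use transversality of maps and homotopies to a regular value for surjectivity and injectivity. If anything, your treatment of injectivity (comparing two arbitrary classes via a homotopy made transverse rel the ends) is slightly more complete than the paper's, which only spells out the case of a null-homotopic $G(\b)$.
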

 
 \begin{proof} Let $M$ be a closed manifold. If $\b: M \to Z$ is an immersion which is an embedding whose normal bundle $\nu^\b$ is trivialized, then applying the Thom construction to $\nu^\b$, we get a map $G(\b): (Z/\d Z) \to Th(\nu^\b) \to Th(\mathbf{1}) \approx S^1$. Let $\star$ be the base point in $S^1$. Note that $G(\b)(\d Z) = \star$. Using Definition \ref{def.IMM}, we conclude that cobordant regular embeddings $\b_0$ and $\b_1)$ lead to the homotopic maps $G(\b_0)$ and $G(\b_1)$.  Therefore, the map $\mathcal G: \mathsf{EMB}(Z) \to [(Z, \d Z), (S^1, \star)]$, induced by $G(\b)$, is well-defined.
 
 On the other hand, if a map $G: (Z, \d Z) \to (S^1, \star)$ is transversal to a point $a \in S^1$, $a \neq \star$, then the preimage $G^{-1}(a)$ (a closed submanifold) delivers a regular embedding $\b_G: G^{-1}(a) \subset Z$ so that $G(\b_G) = G$. Thus, $\mathcal G$ is surjective. 
 
 If $G(\b): (Z, \d Z) \to (S^1, \star)$ is nill-homotopic via a homotopy $H : (Z \times [0, 1], \d Z \times [0, 1]) \to (S^1, \star)$, transversal to $a$, then $(G(\b))^{-1}(a) \subset Z$ is cobordant to the empty embedding via the cobordism  $H^{-1}(a)$. 
 
A similar argument, being applied to a regular embedding $\b: (M, \d M) \to (Z, \d Z)$, produces a map  $G(\b): Z \to Th(\nu^\b) \to Th(\mathbf{1}) \approx S^1$ and, therefore, an element of $H^1(Z; \Z)$, which depends only on the bordism class of $\b$.  The fact that $\mathcal G: \mathsf{EMB}(Z, \d Z) \to H^1(Z; \Z)$ is a bijection has a similar validation.
 \end{proof}
 
 \begin{conjecture} The sets  $\mathsf{IMM}(Z, \d Z)$ and $\mathsf{IMM}(Z)$ depend only on the homotopy type of $Z$ and of $(Z, \d Z)$, respectively. \hfill $\diamondsuit$
 \end{conjecture}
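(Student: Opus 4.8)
The plan is to extend the Thom--Pontryagin dictionary of Lemma~\ref{lem.EMB_is_H} from embeddings to immersions, replacing the single Thom space $Th(\mathbf 1)\approx S^1$ by the \emph{free infinite loop space} it generates, namely $QS^1 =_{\mathsf{def}} \Omega^\infty\Sigma^\infty S^1$. The expectation is a pair of natural bijections
\[
\mathsf{IMM}(Z) \;\cong\; [\,(Z,\d Z),\,(QS^1,\star)\,], \qquad
\mathsf{IMM}(Z,\d Z) \;\cong\; [\,Z,\,QS^1\,],
\]
exactly paralleling $\mathsf{EMB}(Z)\cong H^1(Z,\d Z;\Z)=[(Z,\d Z),(S^1,\star)]$ and $\mathsf{EMB}(Z,\d Z)\cong H^1(Z;\Z)=[Z,S^1]$. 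Granting such a representability statement, the Conjecture is immediate: the set of homotopy classes of maps from a fixed pair (respectively space) into a fixed target depends only on the homotopy type of that pair (respectively space), and the two flavours land on $(Z,\d Z)$ and on $Z$ precisely as asserted. As a coherence check, under the inclusion $S^1\hookrightarrow QS^1$ of the generators the map $\mathsf A$ of Theorem~\ref{th.6_inject} should become the induced map on homotopy classes, and the resolution $\mathsf R$ should become a homotopy retraction $QS^1\to S^1=K(\Z,1)$ realizing the isomorphism $\pi_1(S^1)\xrightarrow{\cong}\pi_1(QS^1)=\pi_0^s=\Z$; this is consistent with $\mathsf R\circ\mathsf A=\mathsf{id}$.

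\noindent\textbf{Construction of the two maps.} In the forward direction, given a $k$-normal immersion $\b:M\to Z$ with trivialized $\nu^\b$, I would refine the collapse of Lemma~\ref{lem.EMB_is_H}: instead of collapsing the complement of one tubular neighbourhood, I would record, branch by branch, the overlapping normal framings of all local sheets of $\b(M)$. Since the sheets meet only along the self-intersection strata $\Sigma^\b_k$, one extra ``scanning'' coordinate separates them, and the collapse then lands not in $S^1$ but in a labelled configuration space $C(\R^\infty;S^1)\simeq QS^1$ (Barratt--Priddy--Quillen--Segal), the points of a fibre recording the finitely many branches together with their framings. The consistent choice of \emph{positive chambers} and the reduction of the structure group to $\mathsf{Diff}_+(D^q,\tilde H_0)$ carried out in the proof of Theorem~\ref{th.6_inject} is exactly what lets these local labels patch into a single global map. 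In the reverse direction, a map $Z/\d Z\to QS^1$ factors, by compactness, through a finite stage $\Omega^m S^{m+1}$, that is, it gives an adjoint map $\Sigma^m(Z/\d Z)\to S^{m+1}$; making it transversal to a regular value $a\neq\star$ (Thom \cite{Th}) yields a framed submanifold $W\subset\Sigma^m(Z/\d Z)$ of dimension $n$, whose projection to the $Z$-factor is a codimension-one immersion with trivialized normal line bundle, its self-intersections arising precisely from collisions of points of $W$ lying at distinct scanning heights.

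\noindent\textbf{Where the work lies.} The hard part will be proving the representability bijection itself and, above all, verifying that the equivalence relation it computes is exactly the quasitopy of Definition~\ref{def.IMM} rather than some coarser immersion bordism. Three points demand care. First, the collapse must be shown independent of the chosen tubes, of the scanning coordinate, and of the stabilization index $m$, and a quasitopy $B:(N,\delta N)\to(Z\times[0,1],\d Z\times[0,1])$ must be shown to induce a homotopy of the associated maps --- here the trivialization of $\nu^B$ extending $\nu^{\b_0}\sqcup\nu^{\b_1}$ is precisely what provides the homotopy of framings. Second, the overlapping of the tubular neighbourhoods of distinct branches is a genuine obstruction to a naive Pontryagin--Thom collapse, and it is removed only by passing to the configuration-space, i.e.\ free-infinite-loop-space, model; the identification of that model with $QS^1$ for the connected label space $S^1$ must be invoked, and the analogous immersion Pontryagin--Thom theorem of Wells and Koschorke adapted to trivialized normal \emph{line} bundles. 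Third, the boundary behaviour --- closed $M$ versus $(M,\d M)$, with $\b(\d M)\subset\d Z$ --- must be matched to the based/unbased and relative/absolute versions on the homotopy side, exactly as in the two cases of Lemma~\ref{lem.EMB_is_H}. Once these are in place the Conjecture follows formally, the entire content being the immersion-theoretic Pontryagin--Thom theorem in the present framed codimension-one setting.
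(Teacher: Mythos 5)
First, a point of order: the paper does not prove this statement --- it is deliberately left as an open conjecture --- so there is no argument of the author's to compare yours against; what follows assesses your proposal on its own terms. Your strategy is the natural and, in fact, the standard one. Since the quasitopy relation of Definition~\ref{def.IMM} is, in the closed case, verbatim the classical cobordism relation for codimension-one immersions with trivialized normal bundle, the representability you posit, $\mathsf{IMM}(Z)\cong[(Z,\partial Z),(\Omega^\infty\Sigma^\infty S^1,\star)]$ and $\mathsf{IMM}(Z,\partial Z)\cong[Z,\Omega^\infty\Sigma^\infty S^1]$, is exactly what the cobordism theory of immersions (Wells; Koschorke--Sanderson) predicts for the trivial line bundle as label bundle, whose Thom space is $S^1$; granted it, the conjecture follows immediately, with the two flavours landing on $(Z,\partial Z)$ and on $Z$ precisely as the statement requires. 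Your coherence checks are also sound: $\mathsf A$ of Theorem~\ref{th.6_inject} should correspond to the unit $S^1\to \Omega^\infty\Sigma^\infty S^1$ (an isomorphism on $\pi_1$), $\mathsf R\circ\mathsf A=\mathsf{id}$ to the retraction onto $K(\Z,1)=S^1$, and the self-intersection invariants of Corollary~\ref{cor.IMM/EMB} to higher stable homotopy data of the target.

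That said, what you have written is a plan, not a proof, and the gap is not merely one of exposition. The representability bijection --- which you yourself identify as ``where the work lies'' --- is asserted, not established, and it carries the entire content of the conjecture. Moreover, one concrete step fails as written: in the reverse direction you propose to make a map transversal to a regular value on $\Sigma^m(Z/\partial Z)$, but $\Sigma^m(Z/\partial Z)$ is not a manifold (nor is $Z/\partial Z$ when $\partial Z\neq\emptyset$), so transversality there is meaningless; one must instead work on $\R^m\times\mathrm{int}(Z)$, compactified by the point at infinity. More seriously, the framed $n$-submanifold $W\subset\R^m\times\mathrm{int}(Z)$ obtained from transversality does \emph{not} automatically project to an immersion of $W$ into $Z$: the projection is an immersion only after the $m$ normal vector fields of $W$ have been straightened to become parallel to the $\R^m$-factor, which is exactly the content of the Rourke--Sanderson Compression Theorem (equivalently, one may invoke Hirsch--Smale immersion theory \cite{Hi}, \cite{S}). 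Without that ingredient your inverse map is simply not defined. Finally, the verification that a quasitopy in the sense of Definition~\ref{def.IMM} induces a homotopy of classifying maps, and the relative/boundary bookkeeping for $\mathsf{IMM}(Z,\partial Z)$, are flagged but not carried out. All of these ingredients do exist in the literature, so your outline is completable --- the conjecture is essentially a translation of known theorems --- but in its present form it does not constitute a proof.
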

 
\noindent {\bf Remark 2.2.}
Note that flipping the orientation of the normal bundle $\nu^\b$ leads to the identity $G(\b) = - G(\bar\b)$ in $H^1(Z, \d Z; \Z)$. Thus, when $H^1(Z, \d Z; \Z) = 0$ or $H^1(Z, \d Z; \Z) = \Z_2$, inverting a ``hairy $n$-cycle" $\b(M) \subset Z$, viewed as an element of $\mathsf{EMB}(Z)$, inside $Z \times [0, 1]$ is easy. Of course, to invert $\b$, via a path in the space of immersions, is much harder \cite{S}, \cite{Hi}. The possibility of such an inversion ``$\b = \bar\b$" for the standard embedding $\b: S^2 \to D^3$ of the hairy sphere follows from the striking result of Smale \cite{S}. Its visual realization is very intricate indeed. 
\hfill $\diamondsuit$
\smallskip

Following the arguments in \cite{K1},  Theorem \ref{th.6_inject} leads directly to Corollary \ref{cor.IMM/EMB} below, 
 where $\mathbf B_j(\sim)$ stands for the $j$-dimensional smooth bordisms and $\mathbf{OB}_j(\sim)$ for the $j$-dimensional oriented smooth bordisms of the relevant space \cite{St}. Recall that if $Z$ is oriented, then each $M$ acquires an orientation with the help of the oriented line bundle $\nu^\b$.
 
\begin{corollary}\label{cor.IMM/EMB} The relative bordism classes of the $k$-fold normal self-intersection manifolds  $\{[\b\circ p_1: \Sigma_k^\b \to (Z, \d Z)]\}_{2 \leq k \leq n+1}$ of immersions $\b: (M, \d M) \to (Z, \d Z)$ with trivial normal bundle $\nu^\b$ give rise to a map 
\begin{eqnarray}\label{eq.IMM_to_Sigma} 
\mathcal B\Sigma:\; \mathsf{IMM}(Z, \d Z) \longrightarrow \bigoplus_{k \in [2, n+1]} \mathbf B_{n+1-k}(Z, \d Z) 
\end{eqnarray}
whose ``kernel" $\mathcal B\Sigma^{-1}(\mathbf 0)$ contains $\mathsf{EMB}(Z, \d Z) \approx H^1(Z; \Z)$. \smallskip

The bordism classes of the $k$-fold normal self-intersection manifolds  $\{[\b\circ p_1: \Sigma_k^\b \to Z]\}_{2 \leq k \leq n+1}$ of immersions $\b: M \to Z$ ($M$ being closed) with trivial normal bundle $\nu^\b$ give rise to a map
\begin{eqnarray}\label{eq.IMM_to_Sigma_A} 
\mathcal B\Sigma:\; \mathsf{IMM}(Z) \longrightarrow \bigoplus_{k \in [2, n+1]} \mathbf B_{n+1-k}(Z) 
\end{eqnarray}
whose ``kernel" $\mathcal B\Sigma^{-1}(\mathbf 0)$ contains $\mathsf{EMB}(Z) \approx H^1(Z, \d Z; \Z)$. \smallskip
 
If $Z$ is oriented, then the target of $\mathcal B\Sigma$ is $\bigoplus_{k \in [2, n+1]} \mathbf{OB}_{n+1-k}(Z, \d Z)$, the relative oriented bordisms, or $\bigoplus_{k \in [2, n+1]} \mathbf{OB}_{n+1-k}(Z)$, the oriented bordisms. 
  \hfill $\diamondsuit$ 
\end{corollary}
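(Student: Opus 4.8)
The plan is to show that the self-intersection assignment $\b \mapsto \{[\b\circ p_1: \Sigma_k^\b \to Z]\}$ descends to a well-defined map on bordism classes, and then to verify that embeddings land in the zero element. First I would observe that for a $k$-normal immersion $\b$, the manifold $\Sigma_k^\b$ together with the map $\b\circ p_1$ is a genuine closed (respectively, manifold-with-boundary in the relative case) of dimension $n+1-k$, as recalled from \cite{LS} in the introduction; when $Z$ is oriented, the trivialization of $\nu^\b$ orients $M$ and hence $\Sigma_k^\b$, so we obtain a class in $\mathbf{OB}_{n+1-k}(Z)$ rather than merely $\mathbf B_{n+1-k}(Z)$. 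The core task is well-definedness: given a cobordism $B:(N,\delta N)\to(Z\times[0,1],\d Z\times[0,1])$ realizing $\b_0\sim\b_1$ in $\mathsf{IMM}(Z)$, I would perturb $B$ to be $k$-normal for all $k\le n+2$ (as in the proof of Theorem \ref{th.6_inject}, invoking \cite{Th}), form the self-intersection locus $\Sigma_k^B\subset N^k$, and check that $B\circ p_1:\Sigma_k^B \to Z\times[0,1]$ restricts on the two ends to $\b_0\circ p_1$ and $\b_1\circ p_1$. Composing with the projection $Z\times[0,1]\to Z$ then exhibits $\Sigma_k^B$ as a bordism between $\Sigma_k^{\b_0}$ and $\Sigma_k^{\b_1}$ over $Z$, which is exactly the statement that $[\b\circ p_1:\Sigma_k^\b\to Z]$ is a bordism invariant.

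Next I would attend to the boundary bookkeeping that distinguishes the two displayed maps. In the relative case $\mathsf{IMM}(Z,\d Z)$, the self-intersection locus $\Sigma_k^\b$ is a manifold with boundary $\d\Sigma_k^\b$ sitting over $\d Z$, and the cobordism manifold $\Sigma_k^B$ carries the corresponding corner structure $\delta(\Sigma_k^B)$; I would check that these assemble into a relative bordism in $\mathbf B_{n+1-k}(Z,\d Z)$, using that $B$ restricts to an embedding on $M_0\sqcup M_1$ so that the ends contribute no spurious self-intersection boundary. The only subtlety is confirming that the perturbation making $B$ $k$-normal can be taken rel boundary and small enough to keep $\nu^B$ trivial, which is precisely the input already extracted in the proof of Theorem \ref{th.6_inject}; I would cite that verbatim rather than re-derive it.

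The statement that $\mathsf{EMB}(Z)$ lies in the kernel is then immediate and forms the bridge to Theorem \ref{th.6_inject}: if $\b$ is (bordant to) an embedding, then for every $k\ge 2$ the self-intersection manifold $\Sigma_k^\b$ is empty, so each class $[\b\circ p_1:\Sigma_k^\b\to Z]$ is the zero bordism class, whence $\mathcal B\Sigma$ vanishes on the image $\mathsf A(\mathsf{EMB}(Z))$. Combined with the identification $\mathsf{EMB}(Z)\approx H^1(Z,\d Z;\Z)$ from Lemma \ref{lem.EMB_is_H} (and its relative counterpart), this yields the asserted inclusions $\mathcal B\Sigma^{-1}(\mathbf 0)\supset\mathsf{EMB}(\sim)$. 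Finally, the oriented refinement requires only tracking orientations through the self-intersection construction: when $Z$ is oriented, the trivialized $\nu^\b$ orients $M$, the transverse intersection of $k$ coherently oriented branches orients $\Sigma_k^\b$, and the cobordism $\Sigma_k^B$ inherits a compatible orientation from the product orientation on $Z\times[0,1]$, so the whole argument repeats in $\mathbf{OB}_{n+1-k}$. The main obstacle I anticipate is the orientation compatibility at the self-intersection corners in the relative oriented case, where one must verify that the induced boundary orientation on $\d\Sigma_k^\b$ matches the one coming from $\d M$ and $\d Z$; I would handle this by a careful local model at a transverse $k$-fold point lying over $\d Z$, using the ordering of branches to fix signs, exactly as the product orientation of the diagonal $\D\subset (Z)^k$ dictates.
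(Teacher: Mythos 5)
Your argument is correct in substance, and it is essentially the proof the paper intends but never writes down: the paper derives Corollary \ref{cor.IMM/EMB} by deferring to ``the arguments in \cite{K1}'' plus Theorem \ref{th.6_inject}, and what that amounts to is exactly your scheme --- choose a $k$-normal representative, perturb a connecting immersion $B$ rel boundary to be $k$-normal for all $k\le n+2$ while keeping $\nu^B$ trivialized (the step already carried out in the proof of Theorem \ref{th.6_inject}), note that $\Sigma_k^B$ with the map $B\circ p_1$ followed by the projection $Z\times[0,1]\to Z$ is a bordism from $\Sigma_k^{\b_0}$ to $\Sigma_k^{\b_1}$, and obtain the kernel statement from the emptiness of $\Sigma_k^\b$ for an embedding representative, together with Lemma \ref{lem.EMB_is_H} for the identification $\mathsf{EMB}\approx H^1$ and the injectivity of $\mathsf A$ from Theorem \ref{th.6_inject}, which is what allows one to regard $\mathsf{EMB}$ as a subset of $\mathsf{IMM}$ at all. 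Your treatment of the oriented refinement likewise matches what the paper asserts before the corollary.

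One sentence of yours is internally inconsistent and should be repaired, although it does not sink the proof. In the relative-case bookkeeping you justify the boundary structure ``using that $B$ restricts to an embedding on $M_0\sqcup M_1$, so that the ends contribute no spurious self-intersection boundary.'' That hypothesis is borrowed from the proof of Theorem \ref{th.6_inject}, where the two ends \emph{were} embeddings; in the present context $\b_0$ and $\b_1$ are arbitrary $k$-normal immersions, and the ends of $\Sigma_k^B$ are precisely $\Sigma_k^{\b_0}\sqcup\Sigma_k^{\b_1}$ --- not spurious, but the very ends of the bordism, as your own first paragraph correctly states. What actually needs to be said in the relative case is different: the lateral part of $\d\Sigma_k^B$, namely the self-intersections lying over $\d Z\times[0,1]$, forms a bordism between $\d\Sigma_k^{\b_0}$ and $\d\Sigma_k^{\b_1}$ over $\d Z$, and this is exactly the datum that the relative bordism relation in $\mathbf B_{n+1-k}(Z,\d Z)$ absorbs; to get the clean corner structure one needs the rel-boundary perturbation to be product-like near $Z\times\{0,1\}$, which the collar argument implicit in Theorem \ref{th.6_inject} supplies.
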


Here is the simplest example of an invariant $$\rho(\b) =_{\mathsf{def}} 
\big[(\b \circ p_1): \Sigma_{n+1}^\b \to Z\big ],$$ delivered by $\mathcal{B}\Sigma$ from (\ref{eq.IMM_to_Sigma_A}), which does discriminate between the quasitopies of embeddings and immersions. By the definition of $\Sigma_{n+1}^\b$, the number of points in $\Sigma_{n+1}^\b$ is divisible by $n+1$.  Let $\rho_\b = \frac{1}{n+1} \#(\Sigma_{n+1}^\b)$ be the number of points in $Z$ where exactly $n+1$ branches of $\b(M)$ meet transversally. If $Z$ is oriented, then $\rho_\b \in \Z$; otherwise, $\rho_\b \in \Z_2$.
 
\begin{lemma}\label{odd_intersections}
Let $\dim M = n$ and let $\rho_\b$ be the number of points in $Z$ where exactly $n+1$ branches of $\b(M)$ meet transversally. If $\rho_\b \equiv 1 \mod 2$, then $\b \in \mathsf{IMM}(Z)\big/ \mathsf A(\mathsf{IMM}(Z))$ is nontrivial. 
\end{lemma}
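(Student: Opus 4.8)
The plan is to upgrade the parity of $\rho_\b$ into an invariant of $\mathsf{IMM}(Z)$ that vanishes identically on the image $\mathsf A(\mathsf{EMB}(Z))$; then $\rho_\b\equiv 1\bmod 2$ forces $\b$ into a cobordism class containing no embedding, hence into a nonzero coset of $\mathsf{IMM}(Z)\big/\mathsf A(\mathsf{EMB}(Z))$. One is tempted to simply invoke the top component $\rho(\b)=[\b\circ p_1:\Sigma^\b_{n+1}\to Z]\in\mathbf B_0(Z)$ of the map $\mathcal B\Sigma$ of Corollary \ref{cor.IMM/EMB}. This succeeds when $n$ is even: the augmentation $\mathbf B_0(Z)\to\mathbf B_0(\mathrm{pt})=\Z_2$ sends $\rho(\b)$ to $\#\Sigma^\b_{n+1}=(n+1)\rho_\b\equiv\rho_\b\bmod 2$. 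When $n$ is odd, however, $\rho(\b)$ itself already vanishes, since the $n+1$ ordered preimages of each $(n+1)$-fold point of $\b$ lie over one and the same point of $Z$ and contribute $(n+1)[\,\ast\,]=0$ in $\mathbf B_0(Z)$. I would therefore bypass $\mathbf B_0(Z)$ and establish the invariance of $\rho_\b\bmod 2$ directly, working with the \emph{geometric} top self-intersection locus; this refines Corollary \ref{cor.IMM/EMB}.

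So let $\b_0,\b_1\in\mathsf{IMM}(Z)$ be cobordant via an immersion $B:N\to Z\times[0,1]$, with $\d N=-M_0\sqcup M_1$, $B|_{M_i}=\b_i$, and $\nu^B$ trivialized. Exactly as in the proof of Theorem \ref{th.6_inject}, perturb $B$ rel the end-faces, keeping $\nu^B$ trivial, so that it is $k$-normal for all $k\le n+2$ \cite{Th}. Since $N$ has dimension $n+1$ and maps into the $(n+2)$-manifold $Z\times[0,1]$, transversality forbids self-intersections of multiplicity $>n+2$, makes the set of $(n+2)$-fold points finite, and makes the locus $W\subset Z\times[0,1]$ where \emph{exactly} $n+1$ sheets of $B(N)$ meet transversally a $1$-dimensional manifold. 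Choosing $B$ to be a product near $Z\times\{0,1\}$, the closure $\bar W$ meets each end-face transversally in the finite set of $(n+1)$-fold points of $\b_i$, of cardinality $\rho_{\b_i}$.

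The crux is to read $\bar W$ as a finite graph and apply the handshake lemma. Its smooth interior points have degree $2$; the $\rho_{\b_0}+\rho_{\b_1}$ points on the two end-faces are degree-$1$ vertices; and each interior $(n+2)$-fold point $z^\ast$ is a vertex of even degree. Indeed, near $z^\ast$ the image $B(N)$ is $n+2$ hyperplanes in general position, and the points lying on exactly $n+1$ of them form $\binom{n+2}{n+1}=n+2$ lines through $z^\ast$ (one for each omitted sheet), i.e. $2(n+2)$ arcs of $W$ emanate from $z^\ast$. Hence the only odd-degree vertices of $\bar W$ are the $\rho_{\b_0}+\rho_{\b_1}$ end-face points; by the handshake lemma their number is even, so $\rho_{\b_0}\equiv\rho_{\b_1}\bmod 2$. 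Thus the assignment $\b\mapsto\rho_\b\bmod 2$ is a well-defined function $\mathsf{IMM}(Z)\to\Z_2$.

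Finally, an embedding has no multiple points, so $\rho=0$ on $\mathsf{EMB}(Z)$, and by the invariance just proved $\rho\equiv 0$ on all of $\mathsf A(\mathsf{EMB}(Z))$. Consequently, if $\rho_\b\equiv 1\bmod 2$ then $\b$ is not cobordant to (the $\mathsf A$-image of) any embedding, so its class in $\mathsf{IMM}(Z)\big/\mathsf A(\mathsf{EMB}(Z))$ is nontrivial. I expect the one delicate point to be the valence computation at the top-multiplicity $(n+2)$-points: verifying that it is even for every $n$ is precisely what singles out the \emph{geometric} count $\rho_\b$ (rather than the ordered count seen by $\mathbf B_0(Z)$) as the correct $\Z_2$-valued invariant. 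The remaining transversality and normal-bundle bookkeeping is routine and is already carried out in the proof of Theorem \ref{th.6_inject}.
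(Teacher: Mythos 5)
Your proof is correct and takes essentially the same route as the paper's: the paper likewise forms the graph $\Gamma_B$ of $(n+1)$-fold points of the cobordism $B(N)$, whose univalent vertices are the $(n+1)$-fold points of $\b$ on the end-faces and whose interior vertices are the finitely many $(n+2)$-fold points, and then applies the same handshake/parity count. Your write-up is in fact slightly sharper on two points worth noting: you prove full invariance of $\rho_\b \bmod 2$ under arbitrary cobordisms (which is what nontriviality modulo all of $\mathsf{A}(\mathsf{EMB}(Z))$ literally requires, whereas the paper only treats cobordism to the trivial immersion), and your local valence $2(n+2)$ at an $(n+2)$-fold vertex corrects the paper's stated valency $n+2$ (the paper's own edge count $2(n+2)\eta_B - \rho_\b \equiv 0 \bmod 2$ implicitly uses the correct figure, so its conclusion stands).
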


 \begin{proof}
 Let $F_\b$ denote the finite set of points in $Z$, where exactly $n+1$ branches of $\b(M)$ meet transversally in $Z$.  Assume that $\b$ be quasitopic to the trivial immersion with the help of a cobordism $B: N \to Z \times [0, 1]$. Then, in general, $B(N)$ may have points in $Z \times [0, 1]$ where exactly $n+2$ branches of $B(N)$ meet transversally. Let $E_B$ denote their finite set and let $\eta_B =_{\mathsf{def}} \#(E_B)$. The points, where exactly $n+1$ branches of $B(N)$ meet transversally, form a graph $\Gamma_B \subset  Z \times [0, 1]$. The graph $\Gamma_B$ has $\rho_\b$ univalent vertices and $\eta_B$ vertices of valency $n+2$. Every edge of $\Gamma_B$ that does not terminate at a valency one vertex from $F_\b$ is attached to $E_B$. Thus counting the edges of $\Gamma_B$ we get $2(n+2)\eta_B - \rho_\b \equiv 0 \mod 2$. Therefore, when $\rho_\b \equiv 1 \mod 2$,  we get a contradiction with the assumption that $\b$ is quasitopic to the trivial immersion. As a result, any immersion $\b$ with an odd number $\rho_\b$ is nontrivial in $\mathsf{IMM}(Z)\big/ \mathsf A(\mathsf{IMM}(Z))$.
\end{proof}

\noindent {\bf Example 2.1.}
Let $Z$ be the $3$-torus or the $3$-torus from which an open $3$-ball $B$ is deleted. Let $M_\star = T^2_1 \coprod T^2_2 \coprod  T^2_3$ be the disjoint union of three $2$-tori, and let $\b_\star: M_\star \to Z$ be the immersion of $M_\star$, whose image is the triple of coordinate subtori in $Z$ which miss $B$ and have a single triple self-intersection. Thus, $\rho_{\b_\star} =1$. 

Since the torus $Z$ is oriented and $H^1(Z, \d Z; \Z) \approx \Z^3$, by Corollary \ref{cor.IMM/EMB}, we get a sequence of maps: 
$$0 \to \Z^3 \stackrel{\mathsf A}{\rightarrow} \mathsf{IMM}(Z) \stackrel{\tilde{\mathcal B}\Sigma}{\rightarrow} \Z^3  \to 0,$$ 
where $\mathsf A$ is injective, $\tilde{\mathcal B}\Sigma$ is surjective, and their composition is trivial. The map $\tilde{\mathcal B}\Sigma$ is a truncation of the map $\mathcal B\Sigma$ from (\ref{eq.IMM_to_Sigma_A}), or rather of its oriented version.  For  (co)homological reasons, in 
the target of $\mathcal B\Sigma$, the oriented $1$-bordism classes  $\{\b \circ p_1: \Sigma_2^\b \to Z\}$ of double intersections determine the oriented $0$-bordism class of triple intersections $\b \circ p_1: \Sigma_3^\b \to Z$. Therefore, we dropped the $0$-bordism component of  $\mathcal B\Sigma$ and formed its truncation $\tilde{\mathcal B}\Sigma$.
\hfill $\diamondsuit$
 \smallskip


Given two immersions $\b_1: M_1 \to Z_1$ and $\b_2: M_2 \to Z_2$, where $M_1, M_2$ are closed $n$-manifolds and $Z_1$ and $Z_2$ are compact connected $(n+1)$-manifolds with connected boundaries, we can form a boundary connected sum $Z_1 \#_\d Z_2$. Consider the obvious map $\b_1 \sqcup \b_2: M_1 \sqcup M_2 \to Z_1 \#_\d Z_2$ that leaves the $1$-handle in $Z_1 \#_\d Z_2$ untouched. This construction gives rise to a coupling 
\begin{eqnarray}\label{eq.IMM_coupling}
\uplus: \mathsf{IMM}(Z_1) \times \mathsf{IMM}(Z_2) \to \mathsf{IMM}(Z_1 \#_\d Z_2).\end{eqnarray}

Let $D^{n+1}$ be a smooth $(n+1)$-dimensional ball. Then, for any connected $(n+1)$-manifold $Z$ with a connected boundary,  we get the coupling
\begin{eqnarray}\label{eq.IMM_coupling_A}
\uplus: \mathsf{IMM}(D^{n+1}) \times \mathsf{IMM}(Z) \to \mathsf{IMM}(Z)
\end{eqnarray}
since $D^{n+1} \#_\d Z \approx Z$.
\smallskip

Arguing as in \cite{K1}, Proposition 3.2, we check that both sets, $\mathsf{IMM}(D^{n+1})$ and $\mathsf{EMB}(D^{n+1})$, are {\sf groups}, where the group operation is induced by a connected sum operation $\uplus$ in (\ref{eq.IMM_coupling}). 

\begin{figure}[ht]
\centerline{\includegraphics[height=3in,width=4.7in]{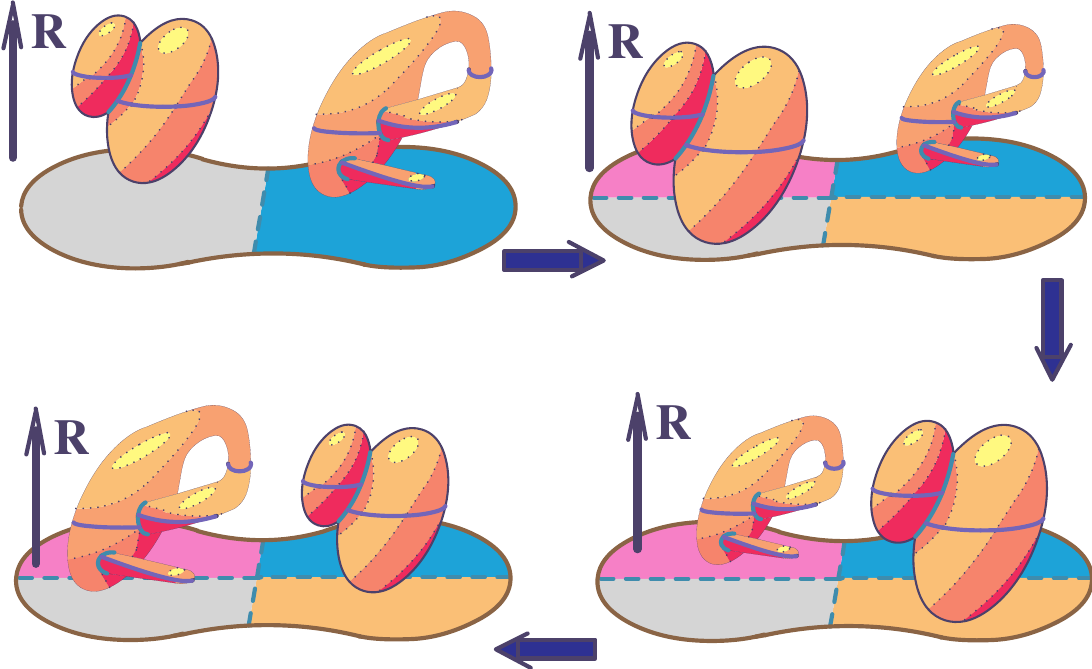}}
\bigskip
\caption{\small{Proving the commutativity of the operation $\uplus$ in the group $\mathsf{IMM}(D^{n+1})$ for $n \geq 2$. The three bold arrows show the effects of isotopies in the ball $D^{n+1}$.}} 
\label{fig.commutativity}
\end{figure}

\begin{theorem} 
\begin{itemize}
\item For $n > 0$, the group $\mathsf{EMB}(D^{n+1})$ is trivial,
\smallskip

\item For $n > 0$, the group $\mathsf{IMM}(D^{n+1})$ is abelian. 

\item For any compact connected smooth $(n+1)$-manifold $Z$ with a connected boundary, $\mathsf{IMM}(D^{n+1})$ acts on the set $\mathsf{IMM}(Z)$ via the connected sum operation $\uplus$ as in (\ref{eq.IMM_coupling}).\smallskip

\item The surjective resolution map $\mathsf{R}:\mathsf{IMM}(Z) \to \mathsf{EMB}(Z) \approx H^1(Z, \d Z; \Z)$ is equivariant with respect to this action. 
\end{itemize}
\end{theorem}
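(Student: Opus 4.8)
The plan is to separate the single genuinely geometric statement (the second bullet) from the three bullets that are formal consequences of the connected-sum calculus, and to dispose of the latter quickly.

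\smallskip

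For the first bullet I would invoke Lemma \ref{lem.EMB_is_H} directly: it identifies $\mathsf{EMB}(D^{n+1})$ with $H^1(D^{n+1}, \d D^{n+1}; \Z) = H^1(D^{n+1}, S^n; \Z)$. In the long exact cohomology sequence of the pair $(D^{n+1}, S^n)$ the restriction $H^0(D^{n+1};\Z) \to H^0(S^n;\Z)$ is an isomorphism (both sides are $\Z$ for $n>0$, since $S^n$ is connected) and $H^1(D^{n+1};\Z)=0$ by contractibility; hence $H^1(D^{n+1},S^n;\Z)=0$ and $\mathsf{EMB}(D^{n+1})$ is trivial, its sole element being the empty embedding. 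Equivalently one may cite Lefschetz duality $H^1(D^{n+1},\d;\Z)\cong H_n(D^{n+1};\Z)=0$.

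\smallskip

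The second bullet is the heart of the matter and the step I expect to be the main obstacle. Commutativity of $\uplus$ on $\mathsf{IMM}(D^{n+1})$ is an Eckmann--Hilton-type phenomenon, entirely analogous to the commutativity of higher homotopy groups: it should follow from the freedom to slide the two summands past one another inside the ambient ball. Concretely, I would represent $\b_1 \uplus \b_2$ and $\b_2 \uplus \b_1$ by placing $\b_1$ and $\b_2$ in two disjoint round sub-balls $B_1, B_2$ in the interior of $D^{n+1}$, joined to the rest only through the connecting $1$-handle that both immersions avoid. When $n\geq 2$ one has $\dim D^{n+1}\geq 3$, so the ordered configuration space of two interior points is simply connected, and there is a path of center-configurations carrying $(c_1,c_2)$ to $(c_2,c_1)$; by the isotopy extension theorem it extends to an ambient isotopy $\Phi_t$ of $D^{n+1}$ supported in a compact subset of the interior, hence equal to the identity near $\d D^{n+1}$, with $\Phi_1(B_1)=B_2$ and $\Phi_1(B_2)=B_1$. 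The three bold arrows of Figure \ref{fig.commutativity} realize exactly such a maneuver. The product family $\Phi_t$ gives an immersion $N\to D^{n+1}\times[0,1]$ whose trivialized normal line bundle is transported by $d\Phi_t$, so the resulting cobordism lies in $\mathsf{IMM}(D^{n+1})$; any residual orientation-preserving diffeomorphism of $B_1$ or $B_2$ is itself isotopic to the identity and may be absorbed. This exhibits a quasitopy between $\b_1\uplus\b_2$ and $\b_2\uplus\b_1$. The delicate point, and the reason extra room is used, is precisely that in ambient dimension $\geq 3$ the swap is unobstructed, so no braiding or framing twist survives; the remaining case $n=1$ I would treat directly by a damped rotation of the two sub-disks.

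\smallskip

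The third bullet is then formal. To see that $(\mathsf{IMM}(D^{n+1}),\uplus)$ acts on the set $\mathsf{IMM}(Z)$ via (\ref{eq.IMM_coupling}), I would check the two axioms on representatives and then descend to quasitopy classes, using that $\uplus$ respects cobordism by its very construction. The empty immersion is the identity of $\mathsf{IMM}(D^{n+1})$ (from the group structure established as in \cite{K1}, Prop.\ 3.2), and $\emptyset \uplus \b = \b$ because $D^{n+1}\#_\d Z \approx Z$ leaves $\b$ untouched. Compatibility $(a \uplus a')\uplus\b = a\uplus(a'\uplus\b)$ reduces to associativity of the boundary connected sum together with the observation that $a,a',\b$ occupy pairwise disjoint regions, so both sides are the same immersion into $D^{n+1}\#_\d D^{n+1}\#_\d Z \approx Z$. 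Finally, for the fourth bullet, the action on the target $\mathsf{EMB}(Z)\approx H^1(Z,\d Z;\Z)$ is the one induced through resolution, i.e.\ through $\mathsf{R}:\mathsf{IMM}(D^{n+1})\to\mathsf{EMB}(D^{n+1})$ and the $\uplus$-coupling on embeddings. The crucial point is that the resolution recipe of Theorem \ref{th.6_inject} is local, supported in tubular neighborhoods of the self-intersection loci; since in $a\uplus\b$ the summand $a$ sits in a ball disjoint from the region carrying $\b$, their self-intersection loci are disjoint and the resolutions do not interact, so (choosing the $k$-normal perturbations separately on the two regions) $\mathsf{R}(a\uplus\b)=\mathsf{R}(a)\uplus\mathsf{R}(\b)$. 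By the first bullet $\mathsf{R}(a)\in\mathsf{EMB}(D^{n+1})=0$, whence $\mathsf{R}(a\uplus\b)=\mathsf{R}(\b)$ by unitality of $\uplus$; thus the induced action on $\mathsf{EMB}(Z)$ is trivial and $\mathsf{R}$ is equivariant.
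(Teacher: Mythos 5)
Your proposal is correct and follows essentially the same route as the paper: $\mathsf{EMB}(D^{n+1})=0$ via Lemma \ref{lem.EMB_is_H}, commutativity via the ball-sliding isotopy that the paper's Figure \ref{fig.commutativity} depicts (and otherwise delegates to \cite{K1}, Proposition 3.2), and equivariance of $\mathsf{R}$ from the locality of the resolution construction in the proof of Theorem \ref{th.6_inject}. The only difference is that you spell out in full the details the paper handles by citation.
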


\begin{proof} By the arguments as in \cite{K1}, Proposition 3.2, $\mathsf{IMM}(D^{n+1})$ and $\mathsf{EMB}(D^{n+1})$ are abelian groups for $n\geq  1$. Moreover, they act on the set $\mathsf{IMM}(Z)$ via the connected sum operation $\uplus$ as in (\ref{eq.IMM_coupling_A}).

By Lemma \ref{lem.EMB_is_H}, $\mathsf{EMB}(D^{n+1}) \approx H^1(D^{n+1}, \d D^{n+1}; \Z) = 0$ for $n \geq 1$.

Tracing the arguments from the proof of Theorem \ref{th.6_inject}, the surjective resolution map $\mathsf{R}:\mathsf{IMM}(Z) \to \mathsf{EMB}(Z) \approx H^1(Z, \d Z; \Z)$ is equivariant with respect to $\mathsf{IMM}(D^{n+1})$-action.
\end{proof}




\begin{thebibliography}{30}














\bibitem [Hi]{Hi} Hirsch, M., {\it Immersions of manifolds}, Trans.Amer. Math. Soc., 93 (1959), 242-276.







\bibitem [K1]{K1} Katz, G., {\it Spaces of  polynomials with constrained divisors as Grassmanians for immersions and embeddings}, arXiv: 2201.02744v1 [math.GT] 8 Jan 2022.

\bibitem [K2]{K2} Katz, G., {\it Spaces of  polynomials with constrained divisors as Grassmanians for traversing flows}, JP Journal of Geometry and Topology, Vol. 29, No 1, 2023, Pages 47-120 (arXiv: 2202.00862v2 [math.GT] 18 Apr 2022).














		


\bibitem[LS]{LS} Lashof, R., Smale, S., {\it Self-intersections of Immersed Manifolds}, Journal of Mathematics and Mechanics, Vol. 8, No. 1 (1959), pp. 143-157.




 
 
 
\bibitem [S]{S} Smale, S.,  {\it The classification of immersions of spheres in Euclidean spaces}, Ann. Math. 69, 27-344 (1959). MR0105117 (21:3862).

\bibitem [St]{St} Stong, R., E., {\it Notes on Cobordism Theory}, Mathematical Notes, Princeton University Press, 1968.


\bibitem [Th]{Th} Thom, R.,  {\it La classification des immersions (d'apr\`{e}s Smale)}, S\'{e}minaire Bourbaki, Expos\'{e} 157, 11 pp. (1957) MR0105693 (21:4429).

\bibitem [Th1]{Th1} Thom, R.,  {\it Les singularites des applications differentiable}, Comm. Math. Helv., 28 (1954), 17-86. 

\bibitem [Thur]{Thur} Thurston, W.P., {\it A norm for the homology of $3$-manifolds}, Memours of AMS, vol. 59, no. 339 (1986), 100-130. MR88h:57014









\end{thebibliography}
\end{document}